\documentclass[11pt]{ip-journal} 

\usepackage[applemac]{inputenc}
\usepackage{amsmath, amsthm, amssymb}
\usepackage[all]{xy}
\usepackage{marginnote}
\usepackage{color}

\theoremstyle{plain}
\newtheorem{theorem}{Theorem}[section]

\newtheorem{proposition}[theorem]{Proposition}

\newtheorem{lemma}[theorem]{Lemma}

\newtheorem{question}[theorem]{Question}
\theoremstyle{definition}
\newtheorem{definition}[theorem]{Definition}

\theoremstyle{remark}
\newtheorem{remark}[theorem]{Remark}

\DeclareMathOperator{\HSC}{HSC}
\DeclareMathOperator{\Ric}{Ric}

\begin{document}
\bibliographystyle{amsalpha}

\title[Holomorphic sectional curvature and canonical bundle]{Quasi-negative holomorphic sectional curvature and positivity of the canonical bundle} 

\author{Simone Diverio \and Stefano Trapani}
\address{Simone Diverio \\ Istituto \lq\lq Guido Castelnuovo\rq\rq{} \\ SAPIENZA Università di Roma \\ Piazzale Aldo Moro, 5 \\ I-00185 Roma. \\}
\email{diverio@mat.uniroma1.it} 
\address{Stefano Trapani \\ Università di Roma \lq\lq Tor Vergata\rq\rq{} \\ Via della Ricerca Scientifica 1 \\ I-00133 Roma. \\}
\email{trapani@mat.uniroma2.it}

\thanks{The first–named author was partially supported by the ANR Programme: Défi de tous les savoirs (DS10) 2015, \lq\lq GRACK\rq\rq{}, Project ID: ANR-15-CE40-0003ANR, and Défi de tous les savoirs (DS10) 2016, \lq\lq FOLIAGE\rq\rq{}, Project ID: ANR-16-CE40-0008.}
\keywords{Holomorphic sectional curvature, Monge--Amp\`ere equation, canonical bundle}
\subjclass{Primary: 32Q15; Secondary: 32Q05.}
%\date{August 11, 2016 (first revision June 16, 2017)}

\begin{abstract}
We show that if a compact complex manifold admits a K\"ahler metric whose holomorphic sectional curvature is everywhere non positive and strictly negative in at least one point, then its canonical bundle is positive. This answers in the affirmative to a question first asked by S.-T. Yau.
\end{abstract}

\maketitle

\section{Introduction}

Let $(X,\omega)$ be a Kähler manifold and let $\Theta(T_X,\omega)$ be its Chern curvature. The holomorphic sectional curvature of $\omega$ in the direction given by a tangent vector $v\in T_{X,x}\setminus\{0\}$ is defined by
$$
\HSC_\omega(x,[v])=\frac 1{||v||_\omega^4}\bigl\langle\Theta_x(T_X,\omega)\cdot v,v\bigr\rangle_\omega(v,\bar v).
$$
In the above formula, $\Theta(T_X,\omega)$ firstly acts as an endomorphism of the holomorphic tangent space and then, once contracted again with $v$ using the hermitian product defined by $\omega$, eats the pair $(v,\bar v)$ as a $(1,1)$-form. Despite it is well-known that the holomorphic sectional curvature completely determines the Chern curvature tensor, it is not \textsl{a priori} clear wether and how its sign propagates and determines the signs of the other curvature tensors. An average argument anyway shows that there is a direct link between the sign of the holomorphic sectional curvature and the scalar curvature, \textsl{i.e.} the trace of the Ricci curvature, namely the positivity (resp. the negativity) of the former implies the positivity (resp. the negativity) of the latter. 

However, it was conjectured by S.-T. Yau that a compact Kähler manifold $(X,\omega)$ with negative holomorphic sectional curvature should aways admit (a possibly different) Kähler metric $\omega'$ with negative Ricci curvature. Then, $K_X$ should be ample and, in particular, $X$ should be projective. This conjecture has been proved only very recently in the projective case by Wu and Yau in \cite{WY16}. After this major breakthrough, the result was extended by using essentially the same techniques to the Kähler case in \cite{TY15}. Before these achievements, only some cases were known under some extra conditions. For instance, this was proven in \cite{HLW10} supposing the abundance conjecture to hold true (which is the case indeed in dimension less than or equal to three).

This circle of ideas fits also in the general conjectural picture of Kobayashi hyperbolicity of compact Kähler manifolds. Namely, it was conjectured by Kobayashi himself that a compact hyperbolic Kähler manifold should have positive canonical bundle. Now, if a compact complex manifold admits a hermitian metric of negative holomorphic sectional curvature, then it is well-known that the manifold in question is Kobayashi hyperbolic (the converse does not hold in general, see \cite[Theorem 8.2]{Dem97} for a very interesting class of projective examples). Thus, negativity of the holomorphic sectional curvature is a strong way to have hyperbolicity and the result of Wu--Yau can also be seen as a weak confirmation of the Kobayashi conjecture for projective manifolds (and Wu--Yau coupled with Tosatti--Yang for the Kähler case).

Now, what about compact Kähler manifolds with merely non positive holomorphic sectional curvature? Such manifolds surely have nef canonical bundle thanks to \cite{TY15} (in the projective case this is a well-known consequence of Mori's theorem, since they do not admit any rational curve, see next section for more details). Anyway, such a condition is not strong enough in order to obtain positivity of the canonical bundle, as flat complex tori show. A less obvious but still easy counterexample is given by the product of a flat torus and, say, a compact Riemann surface of genus greater than or equal to two endowed with its Poincaré metric. In this example, over each point there are some directions with strictly negative holomorphic sectional curvature but always some flat directions, too. We refer the reader to the paper \cite{HLW14,HLWZ17} for some nice results about the merely non positive case, including an interesting structure theorem for such manifolds, built upon the so-called nef fibration.

The above discussion can be seen as a motivation for the following (standard, indeed) definition.

\begin{definition}
The holomorphic sectional curvature is said to be \emph{quasi-negative} if $\HSC_\omega\le 0$ and moreover there exists at least one point $x\in X$ such that $\HSC_\omega(x,[v])< 0$ for every $v\in T_{X,x}\setminus\{0\}$.
\end{definition}

The quasi-negative situation was also already considered by S.-T. Yau, who asked whether the same conclusions of the negative case still hold. Here comes our main contribution which is a positive answer to Yau's question.

\begin{theorem}\label{thm:main}
Let $(X,\omega)$ be a connected compact Kähler manifold. Suppose that the holomorphic sectional curvature of $\omega$ is quasi-negative. Then, $K_X$ is ample. In particular, $X$ is projective.
\end{theorem}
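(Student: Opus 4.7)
The plan is to follow the Wu--Yau/Tosatti--Yang continuity method, inserting a new device that converts the strict negativity of $\HSC_\omega$ at the single point $x_0$ into a global quantitative estimate. As a preliminary, the theorem of Tosatti--Yang \cite{TY15} already applies under the weaker hypothesis $\HSC_\omega\leq 0$ and gives that $K_X$ is nef. Consequently, for every $\epsilon>0$ the class $\chi_\epsilon := c_1(K_X)+\epsilon[\omega]$ is K\"ahler, and by Aubin--Yau there is a unique K\"ahler metric $\omega_\epsilon\in\chi_\epsilon$ solving the twisted K\"ahler--Einstein equation $\Ric(\omega_\epsilon)=-\omega_\epsilon+\epsilon\,\omega$.

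The central step will be to show that $K_X$ is \emph{big}, i.e.\ that $\int_X c_1(K_X)^n>0$. Since $\int_X\omega_\epsilon^n = \int_X c_1(K_X)^n+O(\epsilon)$, this reduces to establishing a uniform volume lower bound $\int_X \omega_\epsilon^n \geq c_0>0$. The standard tool is the Royden--Wu--Yau Schwarz lemma computation applied to $\log\operatorname{tr}_{\omega_\epsilon}\omega$: it produces a differential inequality in which the only potentially favorable term involves $\HSC_\omega$, and the maximum principle applied to $\log\operatorname{tr}_{\omega_\epsilon}\omega - A\,u_\epsilon$, for a suitable constant $A>0$, converts it into a pointwise estimate on $\operatorname{tr}_{\omega_\epsilon}\omega$. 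In the strictly negative regime treated by Wu--Yau this yields $\omega\leq C\,\omega_\epsilon$, from which the volume bound follows at once.

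The main obstacle, and where the quasi-negativity must be exploited, is that on any open set where $\HSC_\omega\equiv 0$ the above differential inequality degenerates and the argument only reproduces Tosatti--Yang's nefness statement, not a strict volume lower bound. The new input has to be the strict inequality $\HSC_\omega(x_0,[v])\leq -\delta<0$, uniform in $v\in T_{X,x_0}\setminus\{0\}$: the plan is to perturb $\omega$ to a nearby Hermitian metric $\widetilde\omega$ whose HSC is strictly negative on an entire neighborhood $U$ of $x_0$, and to combine the strict differential inequality available on $U$ with the (possibly degenerate) global one to produce an $\epsilon$-independent bound $\omega\leq C\,\omega_\epsilon$ valid on $U$. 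Integrating over $U$ then gives $\int_X\omega_\epsilon^n\geq \int_U\omega_\epsilon^n\geq c_0>0$ and hence the bigness of $K_X$; making this localization rigorous, while working around the non-K\"ahlerianity of $\widetilde\omega$ and the lack of a pointwise estimate outside $U$, is the main technical difficulty.

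Finally, bigness of $K_X$ together with the K\"ahler hypothesis makes $X$ Moishezon and therefore projective. With $K_X$ nef and big, $K_X$ is semi-ample by the base-point-free theorem; any nontrivial fiber of the associated morphism would be covered by rational curves (the exceptional locus of a big and nef divisor being uniruled), which is excluded by $\HSC_\omega\leq 0$ via the Ahlfors--Schwarz lemma applied to $\mathbb{P}^1\to X$. Hence $K_X$ is ample.
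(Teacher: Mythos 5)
Your overall architecture matches the paper's: nefness of $K_X$ from \cite{TY15}, reduction to the bigness inequality $c_1(K_X)^n>0$ via a uniform lower bound on $\int_X\omega_\varepsilon^n$ for the approximating twisted K\"ahler--Einstein metrics, and then (nef $+$ big $+$ no rational curves) $\Rightarrow$ ample. But the heart of the theorem --- how to convert strict negativity of $\HSC_\omega$ at the single point $x_0$ into a uniform estimate --- is exactly the step you leave open, and the route you sketch for it does not work as stated. The Royden--Wu--Yau maximum principle applied to $\log\operatorname{tr}_{\omega_\varepsilon}\omega - A u_\varepsilon$ is intrinsically global: the maximum may be attained outside your neighborhood $U$, where the coefficient $\kappa$ vanishes and the differential inequality degenerates, and then no pointwise bound on $U$ follows. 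Localizing a maximum principle to $U$ requires control of $\operatorname{tr}_{\omega_\varepsilon}\omega$ on $\partial U$, which is precisely what is unavailable; your own phrase that ``making this localization rigorous \dots is the main technical difficulty'' concedes that the essential idea is missing. (Also, the detour through a perturbed Hermitian metric $\widetilde\omega$ is unnecessary --- the function $\kappa(x)=-\max_{[v]}\HSC_\omega(x,[v])$ is continuous, so it is already bounded below by a positive constant on a neighborhood of $x_0$ --- and it would create new problems, since the Schwarz-lemma computation for a non-K\"ahler reference metric acquires torsion terms.)

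The paper's resolution is different and is the actual content of the argument. One argues by contradiction assuming $\sup_X u_\varepsilon\to-\infty$, which forces $T_\varepsilon=\log\operatorname{tr}_{\omega_\varepsilon}\omega> -u_\varepsilon/n\to+\infty$ uniformly. Setting $M=\tfrac{n+1}{2n}\kappa$, one has $\Delta_{\omega_\varepsilon}T_\varepsilon\ge Me^{T_\varepsilon}-1$, so $T_\varepsilon$ is a subsolution of $\Delta_{\omega_\varepsilon}\varphi=Me^\varphi-1$. Instead of a constant supersolution (which only exists when $M$ is uniformly positive), one builds a \emph{non-constant} supersolution $Af_\varepsilon+B$, where $f_\varepsilon$ solves the linear Poisson equation $\Delta_{\omega_\varepsilon}f_\varepsilon=M-\overline M_\varepsilon$ normalized by $\inf_X f_\varepsilon=0$, and $\overline M_\varepsilon$ is the $\omega_\varepsilon^n$-average of $M$. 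The crucial quantitative input replacing your localization is that $\overline M_{\varepsilon_k}\to\overline M_0>0$ along a subsequence; this follows from the $L^1$-compactness of the normalized $\omega'$-plurisubharmonic potentials $v_\varepsilon=u_\varepsilon-\max_X u_\varepsilon$ and dominated convergence, using only that $M>0$ on an open set. A comparison principle (valid because $M$ is positive somewhere) then gives $T_{\varepsilon_k}\le Af_{\varepsilon_k}+B$, hence $\inf_X T_{\varepsilon_k}\le B$, contradicting $\inf_X T_\varepsilon\to+\infty$. If you want to salvage your write-up, you must either supply this sub/supersolution mechanism (or the even quicker variant of integrating $\Delta_{\omega_\varepsilon}T_\varepsilon\ge Me^{T_\varepsilon}-1$ against $\omega_\varepsilon^n$ and using the same compactness of the $v_\varepsilon$), rather than a local maximum principle on $U$.
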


A particular case of this theorem was already proved in \cite{WWY12} under the additional assumption that the Picard group of $X$ is infinite cyclic, and in \cite{HLW14} under the additional assumption that $X$ is a projective surface. Our approach is essentially borrowed from the work of S.-T. Yau and his collaborators on the subject, namely a Monge--Ampère type equation and Yau's refined Schwarz Lemma, together with some ingredients from pluripotential theory.

Let us record that shortly after the first version of this paper was put on the ArXiv, Wu and Yau in \cite{WY16b} gave a unified proof of the results contained here, as well as in \cite{WY16,TY15}. Also, R. Nomura \cite{Nom16} gave an alternative proof in the strictly negative case by means of the Kähler--Ricci flow.

Let us finish the introduction with the following question. We saw that, by \cite{TY15}, a compact Kähler manifold $X$ with a Kähler metric $\omega$ whose holomorphic sectional curvature is non positive has nef canonical bundle. On the other hand, by the celebrated Abundance Conjecture, such a manifold should have semiample canonical bundle. Now, if $K_X$ is semiample, the cohomology class $-c_1(X)$ has a semipositive smooth representative, hence Yau's solution of the Calabi conjecture implies the existence of a (possibly different) Kähler metric $\omega'$ with non positive Ricci curvature. It is thus legitimate to ask:

\begin{question}
Let $(X,\omega)$ be a compact Kähler manifold such that $\HSC_\omega\le 0$. Does there exist a (possibly different) Kähler metric $\omega'$ on $X$ such that $\Ric(\omega')\le 0$? 
\end{question}

%Finally, in the last section, we prove a structure theorem for compact Kähler manifolds with non-positive holomorphic sectional curvature, under the further assumption of semiampleness for the canonical bundle. The specialization of this structure theorem to compact Kähler surfaces gives a quite precise description of such surfaces which is the exact analogous of a conjecture (which is now a theorem, see last section for more details and references) by S.-T. Yau on compact Kähler manifolds with non-positive holomorphic \emph{bisectional} curvature.

\subsubsection*{Acknowledgments} We would like to warmly thank V. Tosatti and H. Guenancia for interesting and generous exchanges. 
\section{Reduction to the key inequality}

Let $(X,\omega)$ be a $n$-dimensional compact Kähler manifold such that $\HSC_\omega\le 0$. Then, it is classically known (see for instance \cite[Corollary 2]{Roy80}) that $X$ cannot contain any (possibly singular) rational curve, \textsl{i.e.} it does not admit any non constant map $\mathbb P^1\to X$. Now, if $X$ is projective, Mori's theorem immediately gives us that $K_X$ must be nef. If $X$ is merely supposed to be Kähler, then the nefness of $K_X$ still holds true and is a direct consequence of the non positivity of the holomorphic sectional curvature, but this is a much more recent result \cite[Theorem 1.1]{TY15}. 

Now, suppose that one can show under the quasi-negativity assumption of the holomorphic sectional curvature that 
\begin{equation}\label{eqn:kxbig}
c_1(K_X)^n>0.
\end{equation}
Then, by \cite[Theorem 0.5]{DP04}, we deduce that $K_X$ is big. In particular, carrying a big line bundle, $X$ is Moishezon. Since $X$ is Kähler and Moishezon, by Moishezon's theorem $X$ is projective. But then, the following lemma implies that $K_X$ is ample.

\begin{lemma}[Exercise 8, page 219 of \cite{Deb01}]\label{lem:ratcurv}
Let $X$ be a smooth projective variety of general type which contains no rational curves. Then, $K_X$ is ample.
\end{lemma}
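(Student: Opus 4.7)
The plan is to apply the Minimal Model Program. The general type hypothesis gives directly that $K_X$ is big; the no-rational-curves hypothesis will enter twice: first through Mori's Cone Theorem to secure nefness of $K_X$, and then through the geometry of birational contractions to rule out the only remaining obstruction to ampleness.

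First I would invoke Mori's Cone Theorem: the $K_X$-negative part of the Mori cone $\overline{NE}(X)$ is generated by countably many extremal rays, each spanned by the class of a (possibly singular) rational curve. Since $X$ contains no rational curves, this negative part must be empty, so $K_X\cdot C\ge 0$ for every irreducible curve $C\subset X$, i.e.\ $K_X$ is nef. The nefness and bigness of $K_X$ then put us in position to apply Kawamata--Shokurov's Base-Point-Free Theorem: for some $m\gg 0$, the linear system $|mK_X|$ is base-point-free and defines a surjective morphism $\phi\colon X\to Y$ onto a normal projective variety $Y$, with $mK_X=\phi^*H$ for an ample line bundle $H$ on $Y$. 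Bigness of $K_X$ forces $\dim Y=\dim X$, so $\phi$ is birational.

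If $\phi$ is an isomorphism, then $mK_X$ is the pullback of an ample line bundle by an isomorphism, hence ample, and we are done. Suppose instead that $\phi$ is not an isomorphism. Then $\phi$ is a birational but non-finite morphism and must contract some positive-dimensional subvariety of $X$. At this point I would appeal to the fact that the exceptional locus of a birational morphism from a smooth projective variety to a normal projective variety is uniruled, and in particular carries rational curves. This contradicts the standing assumption that $X$ has no rational curves, and hence $\phi$ must be an isomorphism and $K_X$ is ample.

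The main obstacle is the last step: one needs the nontrivial geometric fact that the exceptional fibers of a birational morphism from a smooth variety contain rational curves, ultimately resting on Miyaoka--Mori bend-and-break techniques or on the relative Cone Theorem of Kawamata. This is precisely the place where the no-rational-curves hypothesis intervenes a second time, bridging the gap between semiampleness of $K_X$ and its ampleness; everything before is a direct application of black-box MMP tools.
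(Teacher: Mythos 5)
Your first two steps coincide exactly with the paper's: no rational curves plus the Cone Theorem gives nefness of $K_X$, and then bigness plus the Base Point Free Theorem gives a birational morphism $\phi\colon X\to Y$ with $mK_X=\phi^*H$, so that any failure of ampleness produces an irreducible curve $C$ with $K_X\cdot C=0$. The divergence, and the problem, is in your last step. The ``fact'' you invoke --- that the exceptional locus of a birational morphism from a smooth projective variety onto a normal projective variety is uniruled --- is false in that generality. Take the projective cone over an elliptic curve $E\subset\mathbb P^n$ embedded by a very ample line bundle $L$: its resolution $S=\mathbb P_E(\mathcal O\oplus L^{-1})\to \bar C(E,L)$ is a birational morphism from a smooth projective surface onto a normal projective surface whose exceptional locus is a copy of $E$, which contains no rational curve at all. (The same works with an abelian variety of any dimension in place of $E$.) The reason bend-and-break does not apply there is that the contracted curve is $K$-\emph{positive}; in your situation the contracted curve $C$ is only $K_X$-\emph{trivial}, so neither the absolute nor the relative Cone Theorem applied to $K_X$ itself produces a rational curve --- the $K_X$-negative part of the relevant cone is empty. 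So as written, the no-rational-curves hypothesis never actually gets used a second time, and the argument does not close.

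The missing idea, which is exactly what the paper supplies, is a perturbation that converts the $K$-trivial contracted curve into a genuinely log-$K$-negative one. Since $K_X$ is big, $K_X-\varepsilon H$ is still big for a small rational $\varepsilon>0$ and some very ample $H$, hence $m(K_X-\varepsilon H)\sim D$ effective; setting $\Delta=\varepsilon'D$ with $\varepsilon'$ small enough that $(X,\Delta)$ is klt, one computes $(K_X+\Delta)\cdot C=-\varepsilon\varepsilon'm\,H\cdot C<0$, and the \emph{logarithmic} Cone Theorem then yields a rational curve, the desired contradiction. Note that this perturbation uses the general type hypothesis a second time, which your argument never does --- a warning sign, since some positivity of the contracted divisor (or klt-ness of the target) is indispensable, as the cone example shows. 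Your step could alternatively be repaired by quoting Takayama's theorem that the non-ample locus of $K_X$ on a smooth projective variety of general type is uniruled (this is the paper's remark), or by observing that $Y$ has canonical singularities and invoking Hacon--McKernan's theorem that fibers of resolutions of klt pairs are rationally chain connected; but both of these are substantially deeper black boxes than the elementary klt perturbation.
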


Here is a proof, for the sake of completeness.

\begin{proof}
Since there are no rational curves on $X$, Mori's theorem implies as above that $K_X$ is nef. Since $K_X$ is big and nef, the Base Point Free theorem tells us that $K_X$ is semi-ample. If $K_X$ were not ample, then the morphism defined by (some multiple of) $K_X$ would be birational but not an isomorphism. In particular, there would exist an irreducible curve $C\subset X$ contracted by this morphism. Therefore, $K_X\cdot C=0$. Now, take any very ample divisor $H$. For any $\varepsilon>0$ rational and small enough, $K_X-\varepsilon H$ remains big and thus some large positive multiple, say $m(K_X-\varepsilon H)$, of $K_X-\varepsilon H$ is linearly equivalent to an effective divisor $D$. Set $\Delta=\varepsilon' D$, where $\varepsilon'>0$ is a rational number.  We have:
$$
\begin{aligned}
(K_X+\Delta)\cdot C & =\varepsilon'\,D\cdot C \\
& =\varepsilon'm(K_X-\varepsilon H)\cdot C \\
& =-\varepsilon\varepsilon'm\,H\cdot C<0.
\end{aligned}
$$
Finally, if $\varepsilon'$ is small enough, then $(X,\Delta)$ is a klt pair. Thus, the (logarithmic version of the) Cone Theorem would give the existence of an extremal ray generated by the class of a rational curve in $X$, contradiction.
\end{proof}

\begin{remark}
The same conclusion can be directly obtained by means of \cite[Theorem 1.1]{Tak08}. This theorem states, among other things, that the non-ample locus of the canonical divisor of a smooth projective variety of general type is uniruled. In particular, if there are no rational curves, the non-ample locus must be empty and thus $K_X$ is ample.
\end{remark}

It is thus sufficient to prove inequality (\ref{eqn:kxbig}). Since $K_X$ is nef, for any $\varepsilon>0$, the cohomology class $[\varepsilon\omega-\Ric(\omega)]=\varepsilon[\omega]+c_1(K_X)$ is a Kähler class. By \cite[Proposition 8]{WY16}, for every $\varepsilon>0$, there exists a smooth function $u_\varepsilon$ which solves the following Monge--Ampère equation:

\begin{equation}\label{eqn:ma}
\begin{cases}
\bigl(\varepsilon\omega-\Ric(\omega)+i\partial\bar\partial u_\varepsilon\bigr)^n=e^{u_\varepsilon}\omega^n,\\ 
\omega_\varepsilon:=\varepsilon\omega-\Ric(\omega)+i\partial\bar\partial u_\varepsilon>0.
\end{cases}
\end{equation}

Moreover, again by \cite[Proposition 8]{WY16}, there exists a constant $C>0$ which only depends on $\omega$ and $n=\dim X$, such that
\begin{equation}\label{unifbdd}
\sup_X u_\varepsilon <C.
\end{equation}
Now, 
$$
\begin{aligned}
\int_X e^{u_\varepsilon}\,\omega^n & =\int_X\omega_\varepsilon^n \\
&= \bigl(\varepsilon[\omega]+c_1(K_X)\bigr)^n \\
& =c_1(K_X)^n+\sum_{j=0}^{n-1}{n \choose j}\varepsilon^{n-j}[\omega]^{n-j}\cdot c_1(K_X)^j.
\end{aligned}
$$
Therefore, 
$$
\lim_{\varepsilon\to 0^+}\int_X e^{u_\varepsilon}\,\omega^n=c_1(K_X)^n,
$$
and what we have to show is that 
\begin{equation}\label{eqn:mainineq}
\lim_{\varepsilon\to 0^+}\int_X e^{u_\varepsilon}\,\omega^n>0.
\end{equation}
The next section will be entirely devoted to the proof of inequality (\ref{eqn:mainineq}), which in the sequel will be referred to as \lq\lq key inequality\rq\rq{}.

\section{Proof of the key inequality}

The first observation is that the functions $u_\varepsilon$ we are considering are all $\omega'$-plurisubharmonic for some fixed Kähler form $\omega'$ and $\varepsilon>0$ small enough. For, let $\ell>0$ be such that $\ell\omega-\Ric(\omega)$ is positive and call $\omega'=\ell\omega-\Ric(\omega)$. Thus, for all $0<\varepsilon<\ell$, one has 
$$
0<\varepsilon\omega-\Ric(\omega)+i\partial\bar\partial u_\varepsilon<\ell\omega-\Ric(\omega)+i\partial\bar\partial u_\varepsilon=\omega'+i\partial\bar\partial u_\varepsilon.
$$
Moreover, the $u_\varepsilon$'s are uniformly bounded from above thanks to (\ref{unifbdd}). 

\begin{lemma}[See for instance {\cite[Proposition 2.6]{GZ05}} and {\cite[Proposition 4.8]{GZ17}}]\label{hartogs}
Either $\{u_\varepsilon\}$ converges uniformly to $-\infty$ on $X$ or it is relatively compact in $L^1(X)$.
\end{lemma}

Since this property will be crucial for our approach, following the referee's suggestion, we explain how to get the global case needed here from the standard local case (see for exemple \cite[Theorem 1.46]{GZ17}).

\begin{proof}
Suppose that $\{u_\varepsilon\}$ does not converge uniformly to $-\infty$. Then, there exist a subsequence $\{u_{\varepsilon_k}\}$ and a sequence of points $\{x_k\}\subset X$ such that $\{\sup_X u_{\varepsilon_k}=u_{\varepsilon_k}(x_k)\}_k$ is bounded from above and below, and $\{x_k\}$ converges to some point $x_0\in X$. 

Now, fix a finite atlas $\{V_i\}$ for $X$ consisting of open coordinate charts which are relatively compact in some other bigger local charts. Let $x_0$ be say in $V_0$. By the local version of the lemma, up to subsequences, $u_{\varepsilon_k}$ converges in $L^1(V_0)$. Let $V_1$ be such that $V_0\cap V_1\ne\emptyset$. Since $u_{\varepsilon_k}$ converges in $L^1(V_0\cap V_1)$, up to a further subsequence, again by the local version of the lemma, $u_{\varepsilon_k}$ converges in $L^1(V_1)$. Iterating the reasoning we get the convergence in $L^1(V_i)$, for all $i$, and hence in $L^1(X)$.
\end{proof}

Now, suppose for a moment that we are in the second case of Lemma \ref{hartogs}. Then, there exists a subsequence $\{u_{\varepsilon_k}\}$ converging in $L^1(X)$ and moreover the limit coincides \textsl{a.e.} with a uniquely determined $\omega'$-pluri\-sub\-harmonic function $u$. Up to pass to a further subsequence, we can also suppose that $u_{\varepsilon_k}$ converges pointwise \textsl{a.e.} to $u$. But then, $e^{u_{\varepsilon_k}}\to e^u$ pointwise \textsl{a.e.} on $X$. On the other hand, we have $e^{u_{\varepsilon_k}}\le e^C$ so that, by dominated convergence, we also have $L^1(X)$-convergence and
$$
\lim_{k\to\infty}\int_X e^{u_{\varepsilon_k}}\,\omega^n=\int_X e^u\omega^n>0.
$$
The upshot is that what we need to prove is that $\{u_\varepsilon\}$ does not converge uniformly to $-\infty$ on $X$. From now on, we shall suppose by contradiction that 
$$
\sup_X u_\varepsilon\to-\infty.
$$
Now, as in \cite{WY16}, consider the smooth positive function $S_\varepsilon$ on $X$ defined by
$$
\omega\wedge\omega_\varepsilon^{n-1}=\frac{S_\varepsilon}n\,\omega_\varepsilon^{n}.
$$
Now, define $T_\varepsilon$ to be $\log S_\varepsilon$. In other words, $T_\varepsilon$ is the logarithm of the trace of $\omega$ with respect to $\omega_\varepsilon$.

\begin{lemma}\label{lem:sge-m}
The function $T_\varepsilon$ satisfies the following inequality:
$$
T_\varepsilon\ge -\frac{u_\varepsilon}n.
$$
In particular, if $\{u_\varepsilon\}$ converges uniformly to $-\infty$ on $X$, then $T_\varepsilon$ converges uniformly to $+\infty$ on $X$.
\end{lemma}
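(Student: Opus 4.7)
\medskip

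The plan is to prove the inequality pointwise by an AM--GM argument applied to the eigenvalues of $\omega_\varepsilon$ relative to $\omega$.

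First, I would fix an arbitrary point $x\in X$ and choose local coordinates in which $\omega$ is the standard form at $x$ and $\omega_\varepsilon$ is simultaneously diagonalized with positive eigenvalues $\lambda_1,\dots,\lambda_n>0$. In these coordinates the Monge--Amp\`ere equation (\ref{eqn:ma}) reads
$$
\lambda_1\cdots\lambda_n=\frac{\omega_\varepsilon^n}{\omega^n}=e^{u_\varepsilon(x)}.
$$
On the other hand, the defining relation for $S_\varepsilon$ is precisely the statement that $S_\varepsilon$ equals the trace of $\omega$ with respect to $\omega_\varepsilon$, so in the same diagonalizing frame
$$
S_\varepsilon(x)=\sum_{i=1}^n\frac{1}{\lambda_i}.
$$

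Next I would apply the AM--GM inequality to the positive numbers $1/\lambda_1,\dots,1/\lambda_n$:
$$
\frac{1}{n}\sum_{i=1}^n\frac{1}{\lambda_i}\;\ge\;\Bigl(\prod_{i=1}^n\frac{1}{\lambda_i}\Bigr)^{\!1/n}=e^{-u_\varepsilon(x)/n}.
$$
Combined with the expression for $S_\varepsilon(x)$ this gives $S_\varepsilon(x)\ge n\,e^{-u_\varepsilon(x)/n}$, and taking the logarithm yields
$$
T_\varepsilon(x)\;\ge\;\log n-\frac{u_\varepsilon(x)}{n}\;>\;-\frac{u_\varepsilon(x)}{n},
$$
the strict inequality being guaranteed by $\log n>0$ since $\dim X=n\ge 2$ (the case $n=1$ being uninteresting for the main theorem). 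Since $x$ was arbitrary, this proves the desired inequality on all of $X$.

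Finally, the ``In particular'' assertion is immediate: if $\sup_X u_\varepsilon\to-\infty$ as $\varepsilon\to 0^+$, then the pointwise bound $T_\varepsilon>-u_\varepsilon/n\ge -(\sup_X u_\varepsilon)/n$ forces $\inf_X T_\varepsilon\to +\infty$. There is no real obstacle here---the only genuine content is the AM--GM step, and the care to be taken is simply in matching the conventions so that the ratio $\omega\wedge\omega_\varepsilon^{n-1}/\omega_\varepsilon^n$ really does compute $\operatorname{tr}_{\omega_\varepsilon}\omega$ divided by $n$.
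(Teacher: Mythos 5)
Your proof is correct and follows essentially the same route as the paper: diagonalize $\omega_\varepsilon$ against $\omega$, identify $e^{T_\varepsilon}$ with $\sum_i 1/\lambda_i$ and $e^{u_\varepsilon}$ with $\prod_i\lambda_i$, and finish with an elementary inequality. The paper bounds the sum below by $1/\lambda_{\min}$ rather than invoking AM--GM, but your variant even yields the marginally sharper bound $T_\varepsilon\ge\log n-u_\varepsilon/n$ (and, like the paper's, it tacitly needs $n\ge 2$ for strictness, which you rightly note).
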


\begin{proof}
Let $0<\lambda_{1}\le\cdots\le\lambda_n$ be the eigenvalues of $\omega_\varepsilon$ with respect to $\omega$. Then,
$$
e^{T_\varepsilon}=\operatorname{tr}_{\omega_\varepsilon}\omega=\frac 1{\lambda_1}+\cdots+\frac 1{\lambda_n}\ge\frac 1{\lambda_1}.
$$
Thus, $e^{-T_\varepsilon}\le\lambda_1$ so that $e^{-nT_\varepsilon}\le(\lambda_1)^n\le\lambda_1\cdots\lambda_n$.
But, $e^{u_\varepsilon}\omega^n=\omega_\varepsilon^n=\lambda_1\cdots\lambda_n\,\omega^n$, and so we get $
e^{-nT_\varepsilon}\le e^{u_\varepsilon}$, or, in other words,
$$
T_\varepsilon\ge-\frac{u_\varepsilon}n.
$$
\end{proof}

Next, since we do not dispose of a negative constant uniform upper bound for $\HSC_\omega$, we are naturally led to consider the following continuous function on $X$:
$$
\begin{aligned}
\kappa\colon & X\to\mathbb R \\
& x\mapsto -\max_{v\in T_{X,x}\setminus\{0\}}\HSC_\omega(x,[v]).
\end{aligned}
$$
The quasi-negativity of the holomorphic sectional curvature of Theorem \ref{thm:main} translates in $\kappa\ge 0$ and $\kappa(x_0)>0$ for some $x_0\in X$.

By \cite[Proposition 9]{WY16}, for every $\varepsilon>0$ we have the following crucial inequality which makes the holomorphic sectional curvature enter into the problem:
\begin{equation}\label{eq:maindiffineq}
\Delta_{\omega_\varepsilon}T_\varepsilon(x)\ge\biggl(\frac{n+1}{2n}\kappa(x)+\frac\varepsilon n\biggr)e^{T_\varepsilon(x)}-1.
\end{equation}
Let us set $M(x)=(n+1)\kappa(x)/2n$ and
$$
\overline M_\varepsilon=\frac{\int_X M\,\omega_\varepsilon^n}{\int_X\omega_\varepsilon^n}=
\frac{\int_X Me^{u_\varepsilon}\,\omega^n}{\int_X e^{u_\varepsilon}\,\omega^n}.
$$

\begin{lemma}\label{lem:Mepsilon}
There exist a sequence $\{\varepsilon_k\}$ of positive real numbers converging to zero as $k$ goes to infinity such that
$$
\overline M_{\varepsilon_k}\to\overline M_0>0.
$$
\end{lemma}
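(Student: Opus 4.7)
The plan is to exploit the fact that the averaged quantity $\overline{M}_\varepsilon$ is insensitive to adding a constant to $u_\varepsilon$: I can therefore renormalize and then apply the very same pluripotential-theoretic compactness already invoked earlier in the section. Concretely, set $v_\varepsilon:=u_\varepsilon-\sup_X u_\varepsilon$, so that $\sup_X v_\varepsilon=0$ for every $\varepsilon$. Adding a constant preserves $\omega'$-plurisubharmonicity, so the $v_\varepsilon$ remain $\omega'$-psh for the same $\omega'=\ell\omega-\Ric(\omega)$ chosen at the beginning of the section (as long as $0<\varepsilon<\ell$).

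The alternative ``$\sup_X v_\varepsilon\to-\infty$'' is ruled out by construction, so by the same Proposition 2.6 of \cite{GZ05} invoked before, $\{v_\varepsilon\}$ is relatively compact in $L^1(X)$. Extract a subsequence $v_{\varepsilon_k}\to v$ converging in $L^1(X)$ and pointwise a.e., where $v$ is $\omega'$-psh and in particular finite a.e. (hence not $\equiv-\infty$). Since $v_{\varepsilon_k}\le 0$, one has $e^{v_{\varepsilon_k}}\le 1$, and dominated convergence yields $e^{v_{\varepsilon_k}}\to e^v$ in $L^1(X)$.

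The multiplicative constant $e^{\sup_X u_\varepsilon}$ cancels in the ratio defining $\overline M_\varepsilon$, so
$$
\overline M_\varepsilon=\frac{\int_X M\,e^{v_\varepsilon}\,\omega^n}{\int_X e^{v_\varepsilon}\,\omega^n},
$$
and passing to the limit along $\{\varepsilon_k\}$ produces
$$
\overline M_{\varepsilon_k}\longrightarrow \overline M_0:=\frac{\int_X M\,e^v\,\omega^n}{\int_X e^v\,\omega^n}.
$$

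What remains is to prove $\overline M_0>0$, which is the only delicate step. Since $v$ is $\omega'$-psh and not identically $-\infty$, the set $\{v=-\infty\}$ is pluripolar, hence of zero Lebesgue measure, and so $e^v>0$ a.e.\ on $X$, giving $\int_X e^v\,\omega^n>0$. On the other hand, by continuity of $M$ together with $M(x_0)>0$ (which is precisely the quasi-negativity hypothesis on $\HSC_\omega$), there is an open neighborhood $U$ of $x_0$ on which $M\ge M(x_0)/2$. Thus
$$
\int_X M\,e^v\,\omega^n\ge\frac{M(x_0)}{2}\int_U e^v\,\omega^n>0,
$$
which closes the argument. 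Observe that neither the differential inequality (\ref{eq:maindiffineq}) nor Lemma~\ref{lem:sge-m} enters at this stage; they should presumably become relevant only later, when the positivity of $\overline M_0$ is fed back into the differential inequality to produce the final contradiction.
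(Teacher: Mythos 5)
Your proposal is correct and follows essentially the same route as the paper: renormalize to $v_\varepsilon=u_\varepsilon-\sup_X u_\varepsilon$, invoke the $L^1$-compactness of $\omega'$-psh functions from \cite{GZ05} (the $-\infty$ alternative being excluded since $\sup_X v_\varepsilon=0$), pass to the limit by dominated convergence, and deduce positivity of the limit from $e^v>0$ a.e.\ together with $M>0$ on an open set. No discrepancies worth noting.
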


\begin{proof}
Let us define 
$$
v_\varepsilon(x):=u_\varepsilon(x)-\max_X u_\varepsilon.
$$
Then,  $\{v_\varepsilon\}$ is a family of $\omega'$-plurisubharmonic functions on $X$ such that for every $\varepsilon>0$ one has
$$
\max_X v_\varepsilon = 0.
$$
Therefore, the functions $v_{\varepsilon_k}$ cannot converge uniformly to $- \infty$, and we see as above that, up to extracting a further subsequence, the function $e^{v_{\varepsilon_k}}$ tends to $e^{v}$ in $L^1(X)$ and \textsl{a.e.} pointwise for some quasi-plurisubharmonic function $v$. Next, since we have that $e^{v_{\varepsilon_k}}\le 1$ and $Me^{v_{\varepsilon_k}}\le\max_X M$, by dominated convergence we obtain that both $e^{v_{\varepsilon_k}}$ and $Me^{v_{\varepsilon_k}}$ converge in $L^1(X)$ respectively to $e^v$ and $Me^v$, and moreover 
$$
\int_X Me^{v_{\varepsilon_k}}\,\omega^n\to\int_X Me^v\,\omega^n>0,\quad 
\int_X e^{v_{\varepsilon_k}}\,\omega^n\to\int_X e^v\,\omega^n>0,
$$ 
since $M>0$ on some open set and $e^v> 0$ almost everywhere. To conclude, it suffices to observe that
$$
\frac{\int_X Me^{u_\varepsilon}\,\omega^n}{\int_X e^{u_\varepsilon}\,\omega^n}=\frac{\int_X Me^{v_\varepsilon}\,\omega^n}{\int_X e^{v_\varepsilon}\,\omega^n}.
$$
\end{proof}

Now, for every $\varepsilon>0$, we consider the differential equation on $X$
\begin{equation}\label{eq:kw}
\Delta_{\omega_\varepsilon}\varphi=Me^\varphi-1.
\end{equation}

\begin{proposition}\label{prop:subsuper}
Suppose that $\varphi_+,\varphi_-\in C^2(X)$ are such that 
$$
\Delta_{\omega_\varepsilon}\varphi_-\ge Me^{\varphi_-}-1,\quad\text{and}\quad
\Delta_{\omega_\varepsilon}\varphi_+\le Me^{\varphi_+}-1.
$$
Then, $\varphi_-\le\varphi_+$.
\end{proposition}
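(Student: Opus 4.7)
The plan is to run a maximum-principle comparison argument on $\psi := \varphi_- - \varphi_+$, aiming to prove $\max_X \psi \leq 0$ by contradiction. Subtracting the sub- and super-solution inequalities yields, everywhere on $X$,
$$
\Delta_{\omega_\varepsilon} \psi \;\geq\; M\bigl(e^{\varphi_-} - e^{\varphi_+}\bigr).
$$
To convert this into a \emph{linear} inequality to which the strong maximum principle applies, I would use the identity $e^{\varphi_-} - e^{\varphi_+} = h\,\psi$ with $h := \int_0^1 e^{\varphi_+ + t\psi}\,dt$, a strictly positive smooth function on $X$. This recasts the above as
$$
(\Delta_{\omega_\varepsilon} - Mh)\,\psi \;\geq\; 0,
$$
a linear elliptic differential inequality whose zeroth-order coefficient $-Mh$ is non-positive, since $M \geq 0$ and $h > 0$.

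Now suppose for contradiction that $\max_X \psi > 0$; since $X$ is compact, connected and without boundary, the maximum is attained at an interior point. The strong maximum principle (Hopf) for elliptic operators with non-positive zeroth-order coefficient then forces $\psi$ to be a constant, equal to some positive value $c$. Substituting $\Delta_{\omega_\varepsilon}\psi \equiv 0$ and $\psi \equiv c$ back into the inequality gives $0 \geq Mhc$ pointwise, and since $h, c > 0$ this implies $M \equiv 0$ on $X$. But then the super-solution inequality degenerates to $\Delta_{\omega_\varepsilon}\varphi_+ \leq -1$, and integrating against $\omega_\varepsilon^n$ on the compact manifold without boundary yields
$$
0 \;=\; \int_X \Delta_{\omega_\varepsilon}\varphi_+\;\omega_\varepsilon^n \;\leq\; -\int_X \omega_\varepsilon^n \;<\; 0,
$$
a contradiction. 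Hence $\max_X \psi \leq 0$, which is precisely $\varphi_- \leq \varphi_+$.

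The main obstacle is the possible degeneracy of $M$: a naive pointwise comparison at a maximum point $x_0$ of $\psi$ only gives $M(x_0)\bigl(e^{\varphi_-(x_0)} - e^{\varphi_+(x_0)}\bigr) \leq 0$, which is no contradiction when $M(x_0) = 0$ and could well happen, since $M$ vanishes precisely where the holomorphic sectional curvature does. The linearization step above sidesteps this local difficulty by turning the inequality into one to which the global strong maximum principle applies, propagating information from $x_0$ to all of $X$ and reducing the problem to the single case $M \equiv 0$, which is then killed by integration.
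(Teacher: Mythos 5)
Your proof is correct, but it takes a genuinely different route from the paper's. The paper works with the open set $\Omega=\{\varphi_->\varphi_+\}$: it first rules out $\Omega=X$ by noting that $\varphi_--\varphi_+$ would then be a subharmonic, hence constant, function forcing $M(e^{\varphi_-}-e^{\varphi_+})\le 0$, which contradicts the existence of a point where $M>0$; it then applies the weak maximum principle on the proper open set $\Omega$, where $\varphi_--\varphi_+$ is subharmonic and vanishes on $\partial\Omega$. You instead linearize globally, writing $e^{\varphi_-}-e^{\varphi_+}=h\,\psi$ with $h=\int_0^1 e^{\varphi_++t\psi}\,dt>0$, so that $(\Delta_{\omega_\varepsilon}-Mh)\psi\ge 0$ with non-positive zeroth-order coefficient, and invoke the strong maximum principle to force $\psi$ to be a positive constant, whence $M\equiv 0$; you then kill this degenerate case by integrating $\Delta_{\omega_\varepsilon}\varphi_+\le -1$ over the closed manifold. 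Both arguments are standard comparison principles, but yours has the small advantage of not using anywhere that $M$ is strictly positive at some point (only $M\ge 0$), since the case $M\equiv 0$ is excluded by the mere existence of a supersolution; the paper's argument is more elementary in that it only needs the weak maximum principle and the fact that subharmonic functions on a compact manifold are constant. Your appeal to the strong (Hopf) maximum principle is legitimate here: the coefficient $-Mh$ is continuous and non-positive, and the positive maximum of $\psi$ is attained at an interior point since $X$ is compact without boundary.
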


\begin{proof}
Let $\Omega\subset X$ be the set of points $x\in X$ such that $\varphi_-(x)>\varphi_+(x)$. First, observe that $\Omega$ cannot be the whole of $X$. Indeed, by subtracting the two differential inequalities in the statement we have that
$$
\Delta_{\omega_\varepsilon}(\varphi_--\varphi_+)\ge M(e^{\varphi_-}-e^{\varphi_+}),
$$
so that we would obtain $\Delta_{\omega_\varepsilon}(\varphi_--\varphi_+)\ge 0$ everywhere on $X$. But then, $\varphi_--\varphi_+$ would be constant and hence we would get
$$
M(e^{\varphi_-}-e^{\varphi_+})\le 0.
$$
But this is impossible, since there is at least one point of $X$ where $M$ is strictly positive. Therefore, $\Omega$ is a proper open subset of $X$. In this open subset one has by definition that $\varphi_-(x)>\varphi_+(x)$, which implies that $\Delta_{\omega_\varepsilon}(\varphi_--\varphi_+)\ge 0$, and moreover $\varphi_-=\varphi_+$ on the boundary $\partial\Omega$. By the maximum principle, we thus get a contradiction.
\end{proof}

Next, inspired by \cite{KW74}, we want to construct a supersolution of (\ref{eq:kw}). We first settle a regularity issue.

\begin{lemma}
The function $M\colon X\to\mathbb R$ is Lipschitz.
\end{lemma}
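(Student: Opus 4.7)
The plan is to reduce Lipschitz continuity of $M=\frac{n+1}{2n}\kappa$ to that of $\kappa$, and to view $\kappa$ as (the negative of) a fiberwise maximum of a smooth function on a compact manifold. The decisive observation is that the holomorphic sectional curvature is homogeneous of bidegree $(0,0)$ in $v$ (the factor $\|v\|_\omega^{4}$ precisely kills the homogeneity of the numerator), so it descends to a smooth function
$$
H\colon\mathbb{P}(T_X)\longrightarrow\mathbb{R},\qquad H(x,[v])=\HSC_\omega(x,[v]),
$$
on the projectivized holomorphic tangent bundle, which is compact. Denoting by $\pi\colon\mathbb{P}(T_X)\to X$ the natural projection, one has $\kappa(x)=-\max_{y\in\pi^{-1}(x)}H(y)$, so it suffices to show that this fiberwise maximum is Lipschitz on $X$ with respect to the Riemannian distance induced by $\omega$.

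Next I would invoke the following well-known elementary fact: if $K$ is a compact metric space, $U$ an open subset of a Riemannian manifold, and $F\colon U\times K\to\mathbb{R}$ is Lipschitz in the first variable with constant $L$ uniformly in $k\in K$, then $f(x)=\max_{k\in K}F(x,k)$ is Lipschitz on $U$ with the same constant $L$. The proof is a two-line exchange argument: choosing $k_1\in K$ realizing $f(x_1)=F(x_1,k_1)$, one has $f(x_1)=F(x_1,k_1)\le F(x_2,k_1)+L\,d(x_1,x_2)\le f(x_2)+L\,d(x_1,x_2)$, and symmetrizing in $(x_1,x_2)$ yields the bound.

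Finally, to go from this local principle to the global statement, I would cover $X$ by finitely many charts $U_i$ over which $\pi$ trivializes holomorphically as $U_i\times\mathbb{P}^{n-1}$, take relatively compact refinements $V_i\Subset U_i$ still covering $X$, and apply the fact above on each $\overline{V_i}\times\mathbb{P}^{n-1}$, where $H$ is smooth on a compact set and therefore Lipschitz in the base variable uniformly over the compact fiber $\mathbb{P}^{n-1}$. This immediately makes $\kappa|_{V_i}$ Lipschitz for each $i$, and by compactness of $X$ together with finiteness of the cover, a routine patching along short geodesics (using that the chart distances and the $\omega$-distance are comparable on each $\overline{V_i}$) promotes this to global Lipschitz continuity of $\kappa$, hence of $M$, on all of $X$. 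The only mildly delicate ingredient is this local-to-global patching, but it is entirely standard on a compact Riemannian manifold.
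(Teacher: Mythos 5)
Your proof is correct and rests on the same core mechanism as the paper's: after trivializing the (projectivized) tangent bundle locally, one observes that the fiberwise maximum of a smooth function over a compact fiber is Lipschitz in the base variable on compact sets, via the standard exchange argument. The only difference is organizational --- you argue directly with a finite cover and a Lebesgue-number/patching step, whereas the paper argues by contradiction and localizes the putative failure of Lipschitz continuity at a single point before invoking the same local fact --- so the two proofs are essentially the same.
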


\begin{proof}
It suffices of course to prove that $\kappa\colon X\to\mathbb R$ is Lipschitz. Suppose the contrary. Then, there exists a sequence $(p_i,q_i)\in X\times X$ such that 
$$
\lim_{i\to +\infty}\frac{|\kappa(p_i)-\kappa(q_i)|}{d_X(p_i,q_i)}=+\infty,
$$
where $d_X$ is the distance on $X$ induced by $\omega$. Without loss of generality, up to extract a subsequence,  we can suppose that both $\{p_i\}$ and $\{q_i\}$ converge to the same point $x_0\in X$. Fix a normal, geodesically convex, relatively compact coordinate neighborhood $V$ of $x_0$. The distance $d_X|_V$ is thus equivalent to the flat Euclidean metric on $V$. 
By smoothly trivializing the tangent bundle over $V$ via a $\omega$-unitary local frame, we are led to the simpler situation where we consider a smooth function on the product $\mathbb B^n \times \mathbb C^n,$ where $\mathbb B^n$ is the open unit  ball in $\mathbb C^n$, and take the maximum on the unit sphere in the second set of variables. Such a maximum is clearly a Lipschitz function on compact sets of $\mathbb B^n$.
\end{proof}

\begin{remark}
V. Tosatti kindly communicated to us the following alternative way to proceed. Instead of taking care of the regularity properties of $M$, one may construct an auxiliary function $\widetilde M$ which is smooth and nonnegative on $X$, strictly positive at $x_0$ and bounded above by $M$. Such a function is easily constructed, since $M$ is continuous. 

Now, in (\ref{eq:maindiffineq}), replace $M(x)=\frac{n+1}{2n}\kappa(x)$ by $\widetilde M(x)$. The inequality remains then true and the arguments above and right here below goes through as well without any issue of regularity for the RHS of the equation and then with now smooth $f_\varepsilon$'s.
\end{remark}

Now, since by construction 
$$
\int_X(M- \overline M_\varepsilon)\,\frac{\omega_\varepsilon^n}{n!}=0
$$ 
and $M$ is Lipschitz, we can find for each $\varepsilon>0$ a unique $C^2(X)$-solution $f_\varepsilon$ of the differential equation
$$
\Delta_{\omega_\varepsilon}f_\varepsilon= M- \overline M_\varepsilon,
$$
such that $\inf_X f_\varepsilon=0$. Our next goal is to find uniform real constants $A,B$ such that $A\,f_\varepsilon+B$ is a supersolution of (\ref{eq:kw}). Therefore, we would like to find $A,B$ such that
$$
\Delta_{\omega_\varepsilon}(A\,f_\varepsilon+B)=A\,\Delta_{\omega_\varepsilon}f_\varepsilon\le M\,e^{A\,f_\varepsilon+B}-1,
$$
that is
$$
1-A\overline M_\varepsilon\le M\bigl(e^{A\,f_\varepsilon+B}-A\bigr).
$$
Since $\inf_X f_\varepsilon =0$, then 
$$
e^{A\,f_\varepsilon+B}-A\ge e^B-A.
$$ 
We thus choose $A,B$ to be any real numbers respectively such that $A$ is greater than $1/\overline M_0$ and $B$ is greater than $\log A$. But then, along the subsequence extracted in Lemma \ref{lem:Mepsilon}, for all $k$ large enough, we have
$$
1-A\overline M_{\varepsilon_k}<0,
$$
and 
$$
M\bigl(e^{A\,f_\varepsilon+B}-A\bigr)\ge M\bigl(e^B-A\bigr)\ge 0
$$
holds for any $\varepsilon>0$.
In particular, for all $k$ large enough, $A\,f_{\varepsilon_k}+B$ is a supersolution of (\ref{eq:kw}) with $\varepsilon=\varepsilon_k$. But
$$
\Delta_{\omega_\varepsilon}T_\varepsilon\ge\biggl(\frac{n+1}{2n}\kappa+\frac\varepsilon n\biggr)e^{T_\varepsilon}-1\ge Me^{T_\varepsilon}-1,
$$
so that $T_\varepsilon$ is a subsolution of (\ref{eq:kw}) for all $\varepsilon>0$. By Proposition \ref{prop:subsuper}, for each $k$ large enough, we obtain
$$
T_{\varepsilon_k}\le A\,f_{\varepsilon_k}+B
$$
on $X$.

To conclude the proof of the key inequality, recall that we are assuming by contradiction that 
$$
\lim_{\varepsilon\to 0}\sup_X u_\varepsilon\to -\infty,
$$
and that this would imply that $\lim_{\varepsilon\to 0}\inf_X T_\varepsilon\to +\infty$, thanks to Lemma \ref{lem:sge-m}. But then,
$$
\lim_{\varepsilon\to 0}\inf_X T_\varepsilon=\lim_{k\to +\infty} \inf_X T_{\varepsilon_k}\le\lim_{k\to +\infty} \inf_X(A\,f_{\varepsilon_k}+B)=B,
$$
because of our choice $\inf_X f_\varepsilon =0$. This is absurd.

\begin{remark}
Note that if the holomorphic sectional curvature had been strictly negative, then much more simply we could have chosen as a supersolution a constant (large enough) function. This would have immediately given a uniform upper bound for $T_\varepsilon$, as in \cite{WY16,TY15}.
\end{remark}

\begin{remark}
The following elegant and somehow quicker way to conclude, which bypasses the use of sub and supersolutions, has been kindly communicated to us by H. Guenancia shortly after a first version of the present paper appeared on the ArXiv. Start from the inequality 
$$
\Delta_{\omega_\varepsilon}T_\varepsilon\ge Me^{T_\varepsilon}-1.
$$
Now, integrate over $X$ using the volume form associated to $\omega_\varepsilon$, to get
$$
0=\int_X\Delta_{\omega_\varepsilon}T_\varepsilon\,\omega_\varepsilon^n\ge\int_X\bigl(Me^{T_\varepsilon}-1\bigr)\,\omega_\varepsilon^n.
$$
We obtain therefore the following integral inequality:
$$
\int_X Me^{T_\varepsilon}e^{u_\varepsilon}\,\omega^n\le\int_X e^{u_\varepsilon}\,\omega^n,
$$
and setting $v_\varepsilon=u_\varepsilon-\sup_X u_\varepsilon$ one has
$$
\int_X Me^{T_\varepsilon}e^{v_\varepsilon}\,\omega^n\le\int_X e^{v_\varepsilon}\,\omega^n.
$$
Finally, if we define $C_\varepsilon=\inf_X e^{-u_\varepsilon/n}$, we have that $e^{T_\varepsilon}> C_\varepsilon$, and
$$
C_\varepsilon\int_X Me^{v_\varepsilon}\,\omega^n\le\int_X e^{v_\varepsilon}\,\omega^n.
$$
But then, as in Lemma \ref{lem:Mepsilon}, we can extract a subsequence of $\{v_\varepsilon\}$ with limit $v$ which makes the two integral involved converge to a finite non zero limit. This is a contradiction since $C_\varepsilon\to +\infty$.
\end{remark}

\begin{remark}
Now that we know that $K_X$ is ample, we also know by the classical work of Aubin and Yau that there exists a unique smooth function $u$ on $X$ such that 
$$
\bigl(-\Ric(\omega) + i \partial \bar\partial u\bigr)^n = e^u\,\omega^n
$$ 
and 
$$
-\Ric(\omega) + i \partial \bar{\partial} u > 0.
$$

We claim, as in \cite{WY16}, that $u_{\varepsilon}$ converges to this solution $u$ in each $C^{k, \alpha}$-topology. Indeed, by standard arguments in the theory of Monge--Ampère equations it is sufficient to show hat there exist a uniform $C^0$-estimate for $u_{\varepsilon}$ (see for example the proof of \cite[Theorem 5.1]{BEGZ10}, or \cite[ pp. 360 and 363]{Yau78}). We already know that there exists a uniform upper bound, so we only need a uniform lower bound.

Now, the function $u$ is a subsolution of each of the equations 
$$
\bigl(-\Ric(\omega)  + \varepsilon \omega + i \partial \bar{\partial} u_{\varepsilon}\bigr)^n = e^{u_{\varepsilon}}\,\omega^n,
$$ 
then by \cite[Theorem 2.18]{EGZ11} we have that $u_{\varepsilon } \ge \inf u$ for all $0 < \varepsilon < 1$.
\end{remark}

\bibliography{bibliography}{}

\providecommand{\bysame}{\leavevmode\hbox to3em{\hrulefill}\thinspace}
\providecommand{\MR}{\relax\ifhmode\unskip\space\fi MR }
% \MRhref is called by the amsart/book/proc definition of \MR.
\providecommand{\MRhref}[2]{%
  \href{http://www.ams.org/mathscinet-getitem?mr=#1}{#2}
}
\providecommand{\href}[2]{#2}
\begin{thebibliography}{{Nom}16}

\bibitem[BEGZ10]{BEGZ10}
S{\'e}bastien Boucksom, Philippe Eyssidieux, Vincent Guedj, and Ahmed Zeriahi,
  \emph{Monge-{A}mp\`ere equations in big cohomology classes}, Acta Math.
  \textbf{205} (2010), no.~2, 199--262. \MR{2746347}

\bibitem[Deb01]{Deb01}
Olivier Debarre, \emph{Higher-dimensional algebraic geometry}, Universitext,
  Springer-Verlag, New York, 2001. \MR{1841091}

\bibitem[Dem97]{Dem97}
Jean-Pierre Demailly, \emph{Algebraic criteria for {K}obayashi hyperbolic
  projective varieties and jet differentials}, Algebraic geometry---{S}anta
  {C}ruz 1995, Proc. Sympos. Pure Math., vol.~62, Amer. Math. Soc., Providence,
  RI, 1997, pp.~285--360. \MR{1492539}

\bibitem[DP04]{DP04}
Jean-Pierre Demailly and Mihai P{\u{a}}un, \emph{Numerical characterization of
  the {K}\"ahler cone of a compact {K}\"ahler manifold}, Ann. of Math. (2)
  \textbf{159} (2004), no.~3, 1247--1274. \MR{2113021}

\bibitem[EGZ11]{EGZ11}
Philippe Eyssidieux, Vincent Guedj, and Ahmed Zeriahi, \emph{Viscosity
  solutions to degenerate complex {M}onge-{A}mp\`ere equations}, Comm. Pure
  Appl. Math. \textbf{64} (2011), no.~8, 1059--1094. \MR{2839271}

\bibitem[GZ05]{GZ05}
Vincent Guedj and Ahmed Zeriahi, \emph{Intrinsic capacities on compact
  {K}\"ahler manifolds}, J. Geom. Anal. \textbf{15} (2005), no.~4, 607--639.
  \MR{2203165}

\bibitem[GZ17]{GZ17}
\bysame, \emph{Degenerate complex {M}onge-{A}mp\`ere equations}, EMS Tracts in
  Mathematics, vol.~26, European Mathematical Society (EMS), Z\"urich, 2017.
  \MR{3617346}

\bibitem[HLW10]{HLW10}
Gordon Heier, Steven S.~Y. Lu, and Bun Wong, \emph{On the canonical line bundle
  and negative holomorphic sectional curvature}, Math. Res. Lett. \textbf{17}
  (2010), no.~6, 1101--1110. \MR{2729634}

\bibitem[HLW16]{HLW14}
\bysame, \emph{K\"ahler manifolds of semi-negative holomorphic sectional
  curvature}, J. Differential Geom. \textbf{104} (2016), no.~3, 419--441.
  \MR{3568627}

\bibitem[HLWZ17]{HLWZ17}
G.~{Heier}, S.~S.~Y. {Lu}, B.~{Wong}, and F.~{Zheng}, \emph{{Reduction of
  manifolds with semi-negative holomorphic sectional curvature}}, ArXiv
  e-prints (2017).

\bibitem[KW74]{KW74}
Jerry~L. Kazdan and F.~W. Warner, \emph{Curvature functions for compact
  {$2$}-manifolds}, Ann. of Math. (2) \textbf{99} (1974), 14--47. \MR{0343205}

\bibitem[{Nom}16]{Nom16}
R.~{Nomura}, \emph{{K{\"a}hler manifolds with negative holomorphic sectional
  curvature, K{\"a}hler-Ricci flow approach}}, ArXiv e-prints (2016).

\bibitem[Roy80]{Roy80}
H.~L. Royden, \emph{The {A}hlfors-{S}chwarz lemma in several complex
  variables}, Comment. Math. Helv. \textbf{55} (1980), no.~4, 547--558.
  \MR{604712}

\bibitem[Tak08]{Tak08}
S.~Takayama, \emph{On the uniruledness of stable base loci}, J. Differential
  Geom. \textbf{78} (2008), no.~3, 521--541. \MR{2396253}

\bibitem[TY17]{TY15}
Valentino Tosatti and Xiaokui Yang, \emph{An extension of a theorem of
  {W}u--{Y}au}, J. Differential Geom. \textbf{107} (2017), no.~3, 573--579.
  \MR{3715350}

\bibitem[WWY12]{WWY12}
Pit-Mann Wong, Damin Wu, and Shing-Tung Yau, \emph{Picard number, holomorphic
  sectional curvature, and ampleness}, Proc. Amer. Math. Soc. \textbf{140}
  (2012), no.~2, 621--626. \MR{2846331}

\bibitem[WY16a]{WY16}
Damin Wu and Shing-Tung Yau, \emph{Negative holomorphic curvature and positive
  canonical bundle}, Invent. Math. \textbf{204} (2016), no.~2, 595--604.
  \MR{3489705}

\bibitem[WY16b]{WY16b}
\bysame, \emph{A remark on our paper ``{N}egative holomorphic curvature and
  positive canonical bundle''}, Comm. Anal. Geom. \textbf{24} (2016), no.~4,
  901--912. \MR{3570421}

\bibitem[Yau78]{Yau78}
Shing~Tung Yau, \emph{On the {R}icci curvature of a compact {K}\"ahler manifold
  and the complex {M}onge-{A}mp\`ere equation. {I}}, Comm. Pure Appl. Math.
  \textbf{31} (1978), no.~3, 339--411. \MR{480350}

\end{thebibliography}

\end{document}